\numberwithin{equation}{section}
\theoremstyle{definition}
\newtheorem{lemma}{Lemma}[section]
\newtheorem{theorem}{Theorem}[section]
\newtheorem{corollary}{Corollary}[section]
\newtheorem{definition}{Definition}[section]
\newtheorem{remark}{Remark}[section]
\begin{document}

\title{on a subclass of close-to-convex functions}
\author{Yao Liang Chung$^1$, See Keong Lee$^2$, Maisarah Haji Mohd$^3$ \\
            $^{1,2,3}$\,School of Mathematical Sciences, Universiti Sains Malaysia, 11800 Penang, Malaysia}
\keywords{Close-to-convex; starlike function; subordination.}
\subjclass[2000]{30C45}
\email{chungyaoliang@gmail.com$^1$, sklee@usm.my$^2$, maisarah\_hjmohd@usm.my$^3$, }

\begin{abstract}
In this paper, we introduce a subclass of close-to-convex functions defined in the open unit disk. We 
obtain the inclusion relationships, coefficient estimates and Fekete-Szego inequality. The results presented here would provide extensions of those given in earlier works.
\end{abstract}

\maketitle

\section{Introduction}
We begin by introducing the important classes of functions considered in this article. Let $\mathcal{A}$ denote the class of functions $f(z)$ \textit{normalized} by
\[f(z)=z+\sum^\infty_{n=2}a_nz^n,\]
which are analytic in the \textit{open} unit disk:
\[\mathcal{U}:=\{z \in \mathbb{C}: |z| <1\}.\] 
Also, let $\mathcal{S}$ be the class of functions in $\mathcal{A}$ which are univalent in $\mathcal{U}$ and $\mathcal{P}$ denote the class of analytic function $p$ in $\mathcal{U}$  
\[p(z)=1+\sum^\infty_{n=1}p_nz^n,\] 
such that $p(0)=1$ and $\Re\{p(z)\}>0.$ Any function in $\mathcal{P}$ is called a function with \textit{positive real part} in $\mathcal{U}.$ 

A set $\mathcal{D}$ in the complex plane is said to be convex if the line segment joining any two points in $\mathcal{D}$ lies entirely in $\mathcal{D}$ and starlike if the linear segment joining $w_0=0$ to every other point  $w \in \mathcal{D}$ lies inside $\mathcal{D}$. If a function $f \in \mathcal{A}$ maps $\mathcal{U}$  onto a starlike (convex) domain, we say that $f$ is a starlike (convex) function. The equivalent analytic conditions for starlikeness and convexity are as follows: 
\[\Re\Big(\tfrac{\displaystyle zf'(z)}{\displaystyle f(z)}\Big)>0 \quad \text{and} \quad \Re\Big(1+\tfrac{\displaystyle zf''(z)}{\displaystyle f'(z)}\Big)>0.\]
respectively. The classes consisting of starlike and convex functions are denoted by $\mathcal{S^*}$ and $\mathcal{C}$ respectively. It is well known that $f \in \mathcal{C}$ if and only if $zf'(z)\in \mathcal{S^*}.$ 

A function $f(z) \in \mathcal{S}$ is said to be \textit{starlike of order $\alpha$} if and only if
\[\Re\Big(\tfrac{\displaystyle zf'(z)}{\displaystyle f(z)}\Big)>\alpha\]
for some $\alpha$ ($0 \leq \alpha <1$). We denote by $\mathcal{S}^*(\alpha)$ the class of all functions in $\mathcal{S}$ which are starlike of order $\alpha$ in $\mathcal{U}$. Clearly, we have
\[\mathcal{S}^*(\alpha) \subset \mathcal{S}^*(0)=\mathcal{S}^*.\]
It is well known that if $f \in \mathcal{C},$ then $f \in \mathcal{S}^*(1/2).$ The converse is false as shown by the function $f(z)=z-\tfrac{1}{3}z^2.$

In 1952, Wilfred Kaplan [7] generalized the concept of starlike function to that of a close-to-convex function. An analytic function $f$ is said to be close-to-convex if there exists a univalent starlike function $g$ such that for any $z \in \mathcal{U}$, the inequality
\[\Re\Big(\tfrac{\displaystyle zf'(z)}{\displaystyle g(z)}\Big)>0\] 
holds. We let $\mathcal{K}$ denote the set of all functions that are normalized and close-to-convex in $\mathcal{U}$. All close-to-convex functions are univalent and the coefficient $a_n$ satisfy Bieberbach inequality $|a_n| \leq n.$ Since convex and starshaped domains are close-to-convex, the inclusion relationships
\[\mathcal{C} \subset \mathcal{S^*} \subset \mathcal{K} \subset \mathcal{S}\]
holds true.

A function $f \in \mathcal{A}$ is said to be starlike with respect to symmetrical points in 
$\mathcal{U}$ if it satisfies
\[\Re\Big(\tfrac{\displaystyle zf'(z)}{\displaystyle f(z)-f(-z)}\Big)>0.\]
This class denoted by $\mathcal{SSP}$ was introduced and studied by Sakaguchi in 1959 [13]. Since $\big(f(z)-f(-z)\big)/2$ is a starlike function [3] in $\mathcal{U}$, therefore Sakaguchi's class $\mathcal{SSP}$ is also belongs to $\mathcal{K}.$

Motivated by the class of starlike functions with respect to symmetric points, Gao and Zhou[4] discussed a class $\mathcal{K}_s$ of close-to-convex functions.

\begin{definition}
\normalfont[4] Let $f(z)$ be analytic in $\mathcal{U}$. We say $f \in \mathcal{K}_s$ if there exists a function $g(z)\in \mathcal{S}^{*} ({1/2})$ such that \[\Re\Big(-\tfrac{\displaystyle z^2f'(z)}{\displaystyle g(z)g(-z)}\Big)>0.\]
\end{definition}

\begin{remark}
Note that if $g(z)\in \mathcal{S^{*}} ({1/2})$, then $(-g(z)g(-z))/z \in \mathcal{S^*}$ [3].
\end{remark}

Here, we recall the concept of subordination between analytic functions. Given two functions $f(z)$ and $g(z),$ which are analytic in $\mathcal{U}$. The function $f(z)$  is \textit{subordinate} to $g(z)$, written as $f(z) \prec g(z)$, if there exists an analytic function $w(z)$  defined in $\mathcal{U}$ with 
\[w(0)=0 \quad \text{and} \quad |w(z)| < 1\]
such that 
\[f(z)=g(w(z)).\] 
In particular, if $g$ is univalent in $\mathcal{U}$, then we have the following equivalence
\[f(0)=g(0) \quad \text{and} \quad f(\mathcal{U}) \subset g(\mathcal{U}).\] 

Using the concept of subordination, Wang et al.[15] introduced a general class $\mathcal{K}_s(\varphi)$. 

\begin{definition}
\normalfont[15] For a function $\varphi$ with positive real part, the class $\mathcal{K}_s(\varphi)$ consists of function $f \in \mathcal{A}$ satisfying
\[-\tfrac{\displaystyle z^2f'(z)}{\displaystyle g(z)g(-z)} \prec \displaystyle \varphi(z)\] 
for some function $g(z)\in \mathcal{S}^{*} ({1/2}).$
\end{definition}

Recently, Goyal and Singh[6] introduced and studied the following subclass of analytic functions:
\begin{definition}
\normalfont[6] For a function $\varphi$ with positive real part, a function $f\in\mathcal{A}$ is said to be in the class $\mathcal{K}_s(\lambda,\mu,\varphi)$ if it satisfies the following subordination condition:
\[\frac{\displaystyle z^2f'(z)+ z^{3}f''(z)(\lambda-\mu+2\lambda\mu)+\lambda\mu z^{4}f'''(z)}{\displaystyle -g(z)g(-z)}\prec \varphi(z)\]
where $0\leq\mu\leq\lambda\leq1$ and $g(z)\in \mathcal{S}^{*} ({1/2}).$
\end{definition}

Motivated by aforementioned works, we now introduce the following subclass of analytic functions: 
\begin{definition}
Suppose  $\varphi\in\mathcal{P}.$ A function $f\in\mathcal{A}$ is said to be in the class $K_s^{(k)}(\lambda,\mu,\varphi)$ if it satisfies the following subordination condition:
\[\frac{z^kf'(z)+ z^{k+1}f''(z)(\lambda-\mu+2\lambda\mu)+\lambda\mu z^{k+2}f'''(z)}{g_k(z)}\prec \varphi(z)\]
where $0\leq\mu\leq\lambda\leq1$, $g(z) = z+\sum^\infty_{n=2}b_nz^n \in \mathcal{S}^*{(\tfrac{k-1}{k})}$, $k\geq 1$ is a fixed positive integer and $g_k(z)$ is defined by the following equality
\begin{equation}
g_k(z)= \prod_{v=0}^{k-1} \varepsilon^{-v}g(\varepsilon^v z)
\end{equation}
with $\varepsilon= e^{2\pi i/ k}.$
\end{definition}

For $\varphi(z)=(1+Az)/(1+Bz)$, we get the class
\begin{definition}
A function $f\in\mathcal{A}$ is said to be in the class $K_s^{(k)}(\lambda,\mu,A,B)$ if it satisfies the following subordination condition:
\begin{equation}
\frac{z^kf'(z)+ z^{k+1}f''(z)(\lambda-\mu+2\lambda\mu)+\lambda\mu z^{k+2}f'''(z)}{g_k(z)}\prec \frac{1+Az}{1+Bz}
\end{equation}
where $0\leq\mu\leq\lambda\leq1$, $g(z) = z+\sum^\infty_{n=2}b_nz^n \in \mathcal{S}^*{(\tfrac{k-1}{k})}$, $k\geq 1$ is a fixed positive integer and $g_k(z)$ is defined by the following equality
\[g_k(z)= \prod_{v=0}^{k-1} \varepsilon^{-v}g(\varepsilon^v z)\]
with $\varepsilon= e^{2\pi i/ k}.$
\end{definition}

The condition in (1.2) is equivalent to
\begin{multline}
 \Big|\frac{z^kf'(z)+ z^{k+1}f''(z)(\lambda-\mu+2\lambda\mu)+\lambda\mu z^{k+2}f'''(z)}{g_k(z)}-1\Big|
\\<\Big|A+\frac{B(z^kf'(z)+ z^{k+1}f''(z)(\lambda-\mu+2\lambda\mu)+\lambda\mu z^{k+2}f'''(z))}{g_k(z)} \Big|. \nonumber
\end{multline}

\begin{remark}
(a) For $\mu=0$, and $k=2$, we have the class $\mathcal{K}_s(\lambda,A,B)$[17].
\\(b) When $A=1-2\gamma,B=-1$ and $\lambda=\mu=0$, we obtain the class $\mathcal{K}_s^{(k)}(\gamma)$ [15]. In addition, if $k=2$, then we obtain the class $\mathcal{K}_s(\gamma)$[11].
\\(c) When $A=\beta,B=-\alpha\beta$ and $\lambda=\mu=0$, then we obtain the class $\mathcal{K}_s^{(k)}(\alpha,\beta)$ in [18].
 In addition, if $k=2$, then we obtain the class $\mathcal{K}_s(\alpha,\beta)$[16].
\end{remark}

The following lemmas are needed in order to prove our main results: 
\begin{lemma}\normalfont[16] 
If $g(z)=z+\sum^\infty_{n=2}b_nz^n$ $\in \mathcal{S^{*}} (\tfrac{k-1}{k})$, then
\begin{equation}
G_k(z)=\tfrac{\displaystyle g_k(z)}{\displaystyle z^{k-1}}=z+\sum^\infty_{n=2}B_nz^n \in \mathcal{S^{*}} \subset \mathcal{S}.
\end{equation}
\end{lemma}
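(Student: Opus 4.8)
The plan is to establish starlikeness of $G_k$ directly from the analytic criterion $\Re\big(zG_k'(z)/G_k(z)\big)>0$, exploiting the product structure of $g_k$ to turn a logarithmic derivative into a sum. First I would confirm that $G_k$ is a bona fide normalized analytic function. Each factor $\varepsilon^{-v}g(\varepsilon^v z)=z+b_2\varepsilon^{v}z^2+\cdots$ vanishes to exact order one at the origin with leading coefficient $1$, so the product $g_k$ vanishes to order $k$ with leading term $z^k$; dividing by $z^{k-1}$ gives $G_k(z)=z+\cdots$, analytic on $\mathcal{U}$. Moreover, since $g\in\mathcal{S}^*(\tfrac{k-1}{k})\subset\mathcal{S}$ is univalent with $g(0)=0$, it has no zero in $\mathcal{U}$ other than the origin, whence $g(\varepsilon^v z)\ne 0$ for $z\in\mathcal{U}\setminus\{0\}$ and $g_k$ is zero-free on the punctured disk.

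The heart of the argument is the logarithmic-derivative computation. Differentiating the product yields, on $\mathcal{U}\setminus\{0\}$,
\[
\frac{zg_k'(z)}{g_k(z)}=\sum_{v=0}^{k-1}\frac{(\varepsilon^v z)\,g'(\varepsilon^v z)}{g(\varepsilon^v z)}.
\]
As $z$ ranges over $\mathcal{U}$ and $|\varepsilon^v|=1$, each argument $\varepsilon^v z$ again ranges over $\mathcal{U}$, so the hypothesis $g\in\mathcal{S}^*(\tfrac{k-1}{k})$ forces $\Re\big((\varepsilon^v z)g'(\varepsilon^v z)/g(\varepsilon^v z)\big)>\tfrac{k-1}{k}$ for every $v$. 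Summing the $k$ terms gives
\[
\Re\!\left(\frac{zg_k'(z)}{g_k(z)}\right)>k\cdot\frac{k-1}{k}=k-1.
\]

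It remains to transfer the estimate to $G_k$. Since $G_k(z)=g_k(z)/z^{k-1}$, logarithmic differentiation produces the clean identity
\[
\frac{zG_k'(z)}{G_k(z)}=\frac{zg_k'(z)}{g_k(z)}-(k-1),
\]
so the previous bound immediately yields $\Re\big(zG_k'(z)/G_k(z)\big)>0$ on $\mathcal{U}\setminus\{0\}$; at the origin the ratio extends analytically with value $1>0$, so the strict inequality persists throughout $\mathcal{U}$. This is precisely the criterion for starlikeness, whence $G_k\in\mathcal{S}^*\subset\mathcal{S}$.

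I do not expect a serious obstacle, since the whole argument collapses to an additive estimate once the product is converted into a sum. The only points requiring care are the bookkeeping at $z=0$—verifying that $g_k$ has a zero of exact order $k$ there, so that $G_k$ is normalized and the logarithmic-derivative manipulations are legitimate on the punctured disk, and checking that the boundary value at the origin does not spoil strictness. It is worth noting that the order $\tfrac{k-1}{k}$ in the hypothesis is exactly calibrated so that the $k$ summands combine to the threshold $k-1$ that the division by $z^{k-1}$ subtracts away.
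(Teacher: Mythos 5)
Your proof is correct. Note that the paper itself offers no proof of this lemma---it is quoted verbatim from reference [16] (Wang, Gao and Yuan)---so there is nothing internal to compare against; your argument is the standard one used in that literature: the logarithmic derivative converts the product $g_k(z)=\prod_{v=0}^{k-1}\varepsilon^{-v}g(\varepsilon^v z)$ into the sum $\sum_{v}\varepsilon^v z\,g'(\varepsilon^v z)/g(\varepsilon^v z)$, the order $\tfrac{k-1}{k}$ hypothesis gives each summand real part exceeding $\tfrac{k-1}{k}$, and dividing by $z^{k-1}$ subtracts exactly $k-1$, leaving $\Re\bigl(zG_k'(z)/G_k(z)\bigr)>0$. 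Your attention to the zero of exact order $k$ at the origin, the zero-freeness of $g_k$ on the punctured disk, and the analytic extension of $zG_k'/G_k$ at $z=0$ covers precisely the points a careless write-up would gloss over.
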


\begin{lemma}\normalfont[12] 
Let $f(z)=1+\sum^\infty_{k=1}c_kz^k$ be analytic in $\mathcal{U}$ and $g(z)=1+\sum^\infty_{k=1}d_kz^k$ be analytic and convex in $\mathcal{U}$. If $f \prec g$, then 
\[|c_k| \leq |d_1| \quad \text{where} \quad k\in\mathbb{N}:=\{1,2,3,\ldots\}.\]
\end{lemma}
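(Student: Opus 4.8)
The plan is to prove the base case $k=1$ directly and then reduce every higher coefficient to it by a root-of-unity averaging argument that exploits the convexity of $g$.

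For the base case, since $f \prec g$ there is a Schwarz function $w$ with $w(0)=0$, $|w(z)|<1$ and $f=g\circ w$. Differentiating at the origin gives $c_1 = f'(0) = g'(w(0))\,w'(0) = d_1\,w'(0)$, and the Schwarz lemma yields $|w'(0)|\le 1$; hence $|c_1|\le|d_1|$. Note that this step uses only the existence of the subordinating function, not convexity.

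For a general index $k\ge 2$, I would fix $k$, set $\omega=e^{2\pi i/k}$, and form the symmetrized average
\[
P(z)=\frac1k\sum_{\nu=0}^{k-1} f(\omega^\nu z).
\]
Because $f \prec g$ we have $f(\omega^\nu z)\in g(\mathcal U)$ for each $\nu$, and since $g$ is convex the domain $g(\mathcal U)$ is convex, so the convex combination $P(z)$ again lies in $g(\mathcal U)$; moreover $P(0)=1=g(0)$. Extracting coefficients, the factor $\frac1k\sum_{\nu}\omega^{\nu m}$ equals $1$ when $k\mid m$ and $0$ otherwise, so $P(z)=1+c_k z^k + c_{2k}z^{2k}+\cdots$. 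Writing $P(z)=Q(z^k)$ with $Q(\zeta)=1+c_k\zeta+c_{2k}\zeta^2+\cdots$, one checks that $Q(\mathcal U)=P(\mathcal U)\subset g(\mathcal U)$ and $Q(0)=g(0)$; as $g$ is univalent, the subordination criterion recorded in the excerpt gives $Q \prec g$. Since $c_k$ is precisely the first Taylor coefficient of $Q$, applying the base case to $Q \prec g$ yields $|c_k|\le|d_1|$.

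The crux — and the only place where convexity is indispensable — is the averaging step: one must see that the symmetrized function $P$ still maps $\mathcal U$ into $g(\mathcal U)$, which is exactly the assertion that a finite convex combination of values of $f$ remains inside the convex image. The remaining technical point to verify carefully is that $Q$ is genuinely analytic and single-valued on $\mathcal U$, so that $Q(z^k)=P(z)$ holds as an identity of power series and the substitution $\zeta=z^k$ sweeps out all of $\mathcal U$; once this is in place the problem collapses to the first-coefficient estimate already settled in the base case.
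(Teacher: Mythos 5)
Your proof is correct, and it is in fact the classical root-of-unity averaging argument for Rogosinski's theorem. The paper itself offers no proof of this lemma --- it is quoted verbatim from the cited reference [12] --- so there is nothing internal to compare against; your argument (Schwarz lemma for $k=1$, then symmetrization $P(z)=\tfrac1k\sum_\nu f(\omega^\nu z)$, convexity of $g(\mathcal U)$, and the substitution $P(z)=Q(z^k)$ with $Q\prec g$) is precisely the standard proof of the cited result, and the technical point you flag (analyticity of $Q$ on all of $\mathcal U$) follows at once from the convergence of the series for $P$.
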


\begin{lemma}\normalfont[17] 
Let $\gamma \geq 0$ and $f\in \mathcal{K}$. \textit {Then 
\[F(z)=\tfrac{\displaystyle 1+\gamma}{\displaystyle z^{\gamma}}\int_0^z t^{\gamma -1 }f(t) dt \in \mathcal{K}. \]}
\end{lemma}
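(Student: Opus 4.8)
The plan is to reduce the claim to a first-order differential subordination and then close it with a Miller--Mocanu boundary-point argument. The starting point is the identity obtained by differentiating the definition of $F$: writing $z^{\gamma}F(z)=(1+\gamma)\int_0^z t^{\gamma-1}f(t)\,dt$ and differentiating gives
\[
(1+\gamma)f(z)=zF'(z)+\gamma F(z).
\]
Since $f\in\mathcal{K}$, there is a starlike $g$ with $\Re\big(zf'(z)/g(z)\big)>0$; I would apply the same operator to $g$, defining $G(z)=\frac{1+\gamma}{z^{\gamma}}\int_0^z t^{\gamma-1}g(t)\,dt$, which satisfies the parallel identity $(1+\gamma)g(z)=zG'(z)+\gamma G(z)$.

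First I would record that $G$ is again starlike. For $\gamma>0$ this is the classical fact that the generalized Bernardi--Libera--Livingston operator preserves $\mathcal{S}^{*}$ (and can itself be obtained by the same boundary-point argument used below), while for $\gamma=0$ it is Alexander's theorem, which even yields a convex, hence starlike, $G$. In particular $G$ is univalent, so it is zero-free on $\mathcal U\setminus\{0\}$ and $Q(z):=zG'(z)/G(z)$ is analytic with $Q(0)=1$ and $\Re Q(z)>0$.

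The main computation is to eliminate $f$ and $g$ between the two identities. Setting $P(z)=zf'(z)/g(z)$ and $p(z)=zF'(z)/G(z)$, I would differentiate $(1+\gamma)f=zF'+\gamma F$ to get $(1+\gamma)f'=(1+\gamma)F'+zF''$, multiply by $z$, and substitute $zf'=Pg$ together with $(1+\gamma)g=zG'+\gamma G$ and $zF'=pG$; after dividing by $G$ this collapses to the clean relation
\[
P(z)=p(z)+\frac{zp'(z)}{Q(z)+\gamma}.
\]
Here $p(0)=1$, $\Re P(z)>0$ by close-to-convexity of $f$, and $\Re\big(Q(z)+\gamma\big)>0$ from the previous step.

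It remains to show $\Re p>0$, which is where I expect the only genuine subtlety. Suppose not; since $p(0)=1$, there is a first point $z_0$ with $\Re p(z)>0$ for $|z|<|z_0|$ and $p(z_0)=i\rho$ pure imaginary. By the Miller--Mocanu lemma $z_0p'(z_0)$ is then a negative real number. Because $\Re\big(Q(z_0)+\gamma\big)>0$ we have $\Re\big(1/(Q(z_0)+\gamma)\big)>0$, so
\[
\Re P(z_0)=\Re p(z_0)+z_0p'(z_0)\,\Re\frac{1}{Q(z_0)+\gamma}=0+(\text{negative})(\text{positive})<0,
\]
contradicting $\Re P>0$. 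Hence $\Re\big(zF'(z)/G(z)\big)>0$ with $G$ starlike, i.e.\ $F\in\mathcal{K}$. The hardest part is making the second step and the well-definedness of $Q$ airtight---verifying that $G$ is starlike and zero-free so that the boundary-point evaluation of $Q(z_0)$ is legitimate; the subordination identity and its closure are then routine.
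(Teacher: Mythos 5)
Your proof is correct, but there is nothing in the paper to compare it against: the statement is Lemma 1.3, which the authors quote without proof from reference [17] (Z.\ R.\ Wu), so your argument in effect supplies the proof the paper outsources. Your route is the standard one for Bernardi-type preservation theorems, and it checks out line by line: the identity $(1+\gamma)f(z)=zF'(z)+\gamma F(z)$ and its analogue for $g$ and $G$ are correct; eliminating $f$ and $g$ does yield $P(z)=p(z)+\tfrac{zp'(z)}{Q(z)+\gamma}$ with $P=zf'/g$, $p=zF'/G$, $Q=zG'/G$; and since the Jack/Miller--Mocanu lemma makes $z_0p'(z_0)$ a \emph{negative real} number at a first boundary point $p(z_0)=i\rho$, the term $z_0p'(z_0)\,\Re\bigl(1/(Q(z_0)+\gamma)\bigr)$ is genuinely negative, giving the contradiction with $\Re P>0$. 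Two points deserve emphasis. First, your argument is not self-contained: it leans on the classical fact that the same integral operator preserves $\mathcal{S}^*$ for $\gamma\ge 0$ (Alexander's theorem at $\gamma=0$), and this ingredient is essential because close-to-convexity of $F$ must be witnessed by \emph{some} starlike function, which you take to be $G$; fortunately that fact is proved by the identical boundary-point argument applied to $zG'/G$, so there is no circularity, only a layered structure. Second, the well-definedness issues you flag (analyticity of $p$ and $Q$ on all of $\mathcal{U}$) are indeed settled by univalence of the starlike $G$, whose only zero is a simple one at the origin. So the proposal stands as a legitimate, essentially classical proof of the cited lemma.
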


\section{Main Results}
We first prove the inclusion relationship for the class $\mathcal{K}_s^{(k)}(\lambda,\mu,\varphi)$.
\begin{theorem}
Let $0\leq\mu\leq\lambda\leq1$. Then we have 
\[\mathcal{K}_s^{(k)}(\lambda,\mu,\varphi)\subset \mathcal{K} \subset \mathcal{S}. \]
\end{theorem}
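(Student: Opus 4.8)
The second inclusion $\mathcal{K}\subset\mathcal{S}$ is classical — every close-to-convex function is univalent, as recorded in the Introduction — so the entire content is to prove $\mathcal{K}_s^{(k)}(\lambda,\mu,\varphi)\subset\mathcal{K}$. The plan is to start from the defining subordination and peel it back to the close-to-convexity of $f$ itself. First I would use that $\varphi\in\mathcal{P}$ maps $\mathcal{U}$ into the right half-plane: since subordination forces the range of the left-hand quotient into $\varphi(\mathcal{U})$, for $z\in\mathcal{U}$ we obtain
\[
\Re\left(\frac{z^kf'(z)+(\lambda-\mu+2\lambda\mu)z^{k+1}f''(z)+\lambda\mu z^{k+2}f'''(z)}{g_k(z)}\right)>0 .
\]
Dividing numerator and denominator by $z^{k-1}$ and invoking Lemma 1.1 to replace $g_k(z)/z^{k-1}$ by the genuine starlike function $G_k\in\mathcal{S}^*$ reduces the claim to an inequality with a starlike denominator.

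The key algebraic observation is that the numerator, after division by $z^{k-1}$, is an exact derivative. Writing $\theta=z\,d/dz$ for the Euler operator (so that $\theta f=zf'$, $z^2f''=\theta(\theta-1)f$, and $z^3f'''=\theta(\theta-1)(\theta-2)f$), a short computation collapses the numerator to $\theta\,\Xi(z)=z\Xi'(z)$, where
\[
\Xi(z)=(1-\lambda+\mu)f(z)+(\lambda-\mu)zf'(z)+\lambda\mu z^2f''(z)=P(\theta)f(z),
\]
with $P(\theta)=\lambda\mu\,\theta^2+(\lambda-\mu-\lambda\mu)\theta+(1-\lambda+\mu)$. Since $P(1)=1$, the function $\Xi$ is normalized. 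Hence the reduced inequality reads $\Re\big(z\Xi'(z)/G_k(z)\big)>0$ with $G_k\in\mathcal{S}^*$, which is exactly the assertion that $\Xi\in\mathcal{K}$.

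It remains to transfer close-to-convexity from $\Xi$ back to $f$, i.e.\ to invert the operator $P(\theta)$. The natural tool is Lemma 1.3: the operator $L_a h=(1-a)h+a\,zh'$ is inverted by the integral transform there with $\gamma=(1-a)/a\ge 0$ (for $a\in(0,1]$), and that lemma guarantees the inverse preserves $\mathcal{K}$. When $\mu=0$ this is a single clean step, since $\Xi=L_\lambda f$ and one application of Lemma 1.3 with $\gamma=(1-\lambda)/\lambda$ returns $f\in\mathcal{K}$. For general $\mu$ I would hope to factor $P(\theta)=L_a\circ L_b$ and apply Lemma 1.3 twice, recovering $L_b f\in\mathcal{K}$ from $\Xi\in\mathcal{K}$ and then $f\in\mathcal{K}$.

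The hard part is exactly this last inversion. Matching $L_aL_b$ to $P(\theta)$ forces $a+b=\lambda-\mu+\lambda\mu$ and $ab=\lambda\mu$, whose discriminant $(\lambda-\mu+\lambda\mu)^2-4\lambda\mu$ is negative for some admissible pairs (for instance $\lambda=\mu$, where $P(\theta)=\lambda^2\theta^2+(-\lambda^2)\theta+1$ has no real roots). Thus the two integral-operator parameters need not be real, and Lemma 1.3 does not apply verbatim over the whole range $0\le\mu\le\lambda\le1$. Covering the general case therefore demands more than a naive double application: one must either show directly that the multiplier $n\mapsto 1/P(n)$ acts on $\mathcal{K}$ as convolution with a suitable convex function (via Ruscheweyh--Sheil-Small type theory), or establish an integral-operator lemma tailored to the exact second-order transform inverting $P(\theta)$. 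Verifying that this inversion preserves $\mathcal{K}$ is where the real work lies, while the preceding reduction to $\Xi\in\mathcal{K}$ is routine.
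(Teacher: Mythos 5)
Your reduction coincides exactly with the paper's own: from the subordination you pass to $\Re\big(zF'(z)/G_k(z)\big)>0$, where $F$ (your $\Xi$) is $(1-\lambda+\mu)f(z)+(\lambda-\mu)zf'(z)+\lambda\mu z^2f''(z)$, invoke Lemma 1.1 to see $G_k\in\mathcal{S}^*$, and conclude $F\in\mathcal{K}$ by the definition of close-to-convexity; your single application of Lemma 1.3 with $\gamma=(1-\lambda)/\lambda=1/\lambda-1$ when $\mu=0$ is precisely the paper's Case 2, and $\lambda=\mu=0$ is its trivial Case 1. The only divergence is the remaining case $\mu\neq0$, where you stop short of a proof; so, judged strictly as a proof of the theorem, your proposal has a genuine gap: the inclusion is not established on the full range $0\le\mu\le\lambda\le1$, and the repairs you sketch (a convolution/Ruscheweyh--Sheil-Small argument, or an integral-operator lemma adapted to the true second-order inverse) are not carried out.

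However, the step you declined to take is exactly the step the paper takes, and your objection to it is correct. In its Case 3 the paper writes $G=F/(1-\lambda+\mu)=f+\alpha zf'+\beta z^2f''$ with $\alpha=(\lambda-\mu)/(1-\lambda+\mu)$ and $\beta=\lambda\mu/(1-\lambda+\mu)$, chooses $\delta,\nu$ with $\delta+\nu=\alpha-\beta$ and $\delta\nu=\beta$, factors the operator through $p=f+\delta zf'$, and then applies Lemma 1.3 twice, asserting ``$\gamma=1/\nu\ge0$'' and ``$\gamma=1/\delta\ge0$''. This is your factorization in a different normalization (the paper's $1+\delta\theta$ versus your $(1-a)+a\theta$, with $\delta=a/(1-a)$), and your counterexample applies verbatim: for $\lambda=\mu=\tfrac12$ one finds $\delta+\nu=-\tfrac14$ and $\delta\nu=\tfrac14$, hence $\delta,\nu=(-1\pm i\sqrt{15})/8$, which are non-real with negative real part; consequently $1/\delta$ and $1/\nu$ also have negative real part, and Lemma 1.3 --- stated for real $\gamma\ge0$, and not rescuable by any extension requiring $\Re\gamma\ge0$ --- does not apply. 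The paper's Case 3 is thus justified only for parameters where the roots happen to be real and positive (equivalently $(\lambda-\mu-\lambda\mu)^2\ge4\lambda\mu(1-\lambda+\mu)$ with $\lambda-\mu-\lambda\mu>0$, e.g. $\lambda=1$, $\mu\le\tfrac14$), and fails on a large part of the admissible range, including the whole diagonal $\lambda=\mu\neq0$. So the verdict is double-edged: your proposal is incomplete, but the obstruction you isolated is real, and the published proof silently walks into it; closing the case $\mu\neq0$ genuinely requires input beyond Lemma 1.3.
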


\begin{proof}
Consider $f \in K_s^{(k)}(\lambda,\mu,\varphi).$ By Definition 1.4, we have 
\[\tfrac{\displaystyle z^kf'(z)+ z^{k+1}f''(z)(\lambda-\mu+2\lambda\mu)+\lambda\mu z^{k+2}f'''(z)}{\displaystyle g_k(z)}\prec \varphi(z),\]
which can be written as
\[\tfrac{\displaystyle zF'(z)}{\displaystyle G_k(z)}\prec \varphi(z) \]
where
\begin{equation} 
F'(z)=f'(z)+zf''(z)(\lambda-\mu+2\lambda\mu)+\lambda\mu z^{2}f'''(z)
\end{equation}
and $G_k(z)$ is defined in (1.2). A simple computation on (2.1) gives 
\[F(z)=(1-\lambda+ \mu) f(z)+ (\lambda - \mu) zf'(z) + \lambda\mu z^2f''(z).\]
Since $\Re\varphi(z) >0$, we have
\[\Re\tfrac{\displaystyle zF'(z)}{\displaystyle G_k(z)} >0. \]
Also, since $G_k(z) \in \mathcal{S}^*$(by Lemma 1.1), by definition of close-to-convex function, we deduce that
\[F(z)=(1-\lambda+ \mu) f(z)+ (\lambda - \mu) zf'(z) + \lambda\mu z^2f''(z) \in \mathcal{K}. \]
In order to show $f\in \mathcal{K}$, we consider three cases:
\\\textit{Case 1:} $\mu = \lambda =0$. It is then obvious that $f=F\in \mathcal{K}.$ 
\\\textit{Case 2:} $\mu =0, \lambda \not=0$. Then we obtain
\[F(z)=(1-\lambda) f(z)+ \lambda zf'(z).\]
By using the integrating factor $z^{\tfrac{1}{\lambda}-1}$, we get
\[f(z)=\tfrac{\displaystyle1}{\displaystyle \lambda} z^{ 1-\tfrac{ 1}{ \lambda}}\int_0^z \displaystyle t^{\tfrac{1}{ \lambda}-2}F(t) dt. \]
Taking $\gamma= (1/\lambda)-1$ in Lemma 1.3, we conclude that $f(z) \in \mathcal{K}.$
\\\textit{Case 3:}  $\mu \not=0, \lambda \not=0$. Then we have 
\[F(z)=(1-\lambda+ \mu) f(z)+ (\lambda - \mu) zf'(z) + \lambda\mu z^2f''(z). \]
Let $G(z)=\tfrac{1}{(1-\lambda+ \mu)}F(z)$, so $G(z)\in \mathcal{K}.$ Then
\begin{equation}
G(z)=f(z)+\alpha zf'(z)+\beta z^2 f''(z)
\end{equation}
where $\alpha= \tfrac{\lambda-\mu}{1-\lambda+ \mu}$  and $\beta=\tfrac{\lambda\mu}{1-\lambda+ \mu}.$
Consider $\delta$ and $\nu$ satisfies
\[\delta+\nu=\alpha -\beta \quad \text{and} \quad \delta\nu=\beta.\]
Then, (2.2) can be written as 
\[G(z)=f(z)+(\delta+\nu+\delta\nu) zf'(z)+\delta\nu z^2 f''(z).\]
Let $p(z)=f(z)+\delta zf'(z),$
then 
\[p(z)+\nu zp'(z)=f(z)+(\delta+\nu+\delta\nu) zf'(z)+\delta\nu z^2 f''(z)=G(z).\]
On the other hand, $p(z)+\nu zp'(z)=\nu z^{1-1/\nu}\Big(z^{1/\nu}p(z)\Big)'$.
So,
\[G(z)=\nu z^{1-1/\nu} \Big[\delta z^{1+1/\nu-1/\delta} \Big(z^{1/\delta} f(z)\Big)'\Big]'.\]
Hence
\[\delta z^{1+1/\nu -1/\delta} \Big(z^{1/\delta} f(z)\Big)'= \dfrac{1}{\nu}\int_0^z w^{1/\nu-1}G(w) dw.\]
Multiply by $(1+\nu)$ at both sides and divided by $z^{1/\nu}$ , we get
\[(1+\nu)\delta z^{1-1/\delta}\Big(z^{1/\delta}f(z)\Big)'=\dfrac{1+1/\nu}{z^{1/\nu}}\int_0^z w^{1/\nu-1}G(w) dw. \] 
Since $\gamma=1/\nu\geq 0$, therefore by Lemma 1.3, we have 
\[H(z)=\dfrac{1+1/\nu}{z^{1/\nu}}\int_0^z w^{1/\nu-1}G(w) dw \in K.\]
Further, 
\[(1+\nu)z^{1/\delta}f(z)=\dfrac{1}{\delta}\int_0^z t^{1/\delta-1}H(t) dt.\]
Multiply by $(1+\delta)$ at both sides and divided by $z^{1/\delta}$ , we get
\[(1+\delta)(1+\nu)f(z)=\dfrac{1+1/\delta}{z^1/\delta}\int_0^z t^{1/\delta-1}H(t) dt. \] 
Since $\gamma=1/\delta\geq 0$, therefore by Lemma 1.3, we have 
$f \in \mathcal{K}.$
This complete the proof of the theorem.
\end{proof}

Next, we give the coefficient estimates of functions belongs to the class $\mathcal{K}_s^{(k)}(\lambda,\mu,\varphi)$.
\begin{theorem}
Let  $0\leq\mu\leq\lambda\leq1$. If  $f\in K_s^{(k)}(\lambda,\mu,\varphi)$, then
\[|a_n|\leq\tfrac{\displaystyle1}{\displaystyle1+(n-1)(\lambda-\mu+n\lambda\mu)}\Big(1+\tfrac{\displaystyle |\varphi'(0)|(n-1)}{\displaystyle2} \Big) \quad (n\in\mathbb{N}).\] 
\end{theorem}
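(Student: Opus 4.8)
The plan is to read off the coefficient estimate directly from the subordination, reusing the reformulation already obtained in the proof of Theorem 2.1. There it is shown that $f\in \mathcal{K}_s^{(k)}(\lambda,\mu,\varphi)$ is equivalent to $zF'(z)/G_k(z)\prec\varphi(z)$, where $F$ is given by (2.1) and $G_k(z)=z+\sum_{n=2}^\infty B_nz^n\in\mathcal{S}^*$ by Lemma 1.1. Writing $q(z)=zF'(z)/G_k(z)=1+\sum_{m=1}^\infty q_mz^m$, the subordination becomes $q\prec\varphi$, and I would record the factorization $zF'(z)=G_k(z)\,q(z)$ so that the $n$-th Taylor coefficient of the left-hand side can be matched against the Cauchy product on the right.

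First I would compute the left-hand side. Expanding $F'(z)=f'(z)+(\lambda-\mu+2\lambda\mu)zf''(z)+\lambda\mu z^2f'''(z)$ and multiplying by $z$, the coefficient of $z^n$ in $zF'(z)$ equals
\[
a_n\bigl[\,n+(\lambda-\mu+2\lambda\mu)\,n(n-1)+\lambda\mu\,n(n-1)(n-2)\,\bigr].
\]
The bracket factors as $n\bigl[1+(n-1)(\lambda-\mu+n\lambda\mu)\bigr]$, so, writing $\Lambda_n:=1+(n-1)(\lambda-\mu+n\lambda\mu)$, this gives $[z^n]\,zF'(z)=n\Lambda_n a_n$. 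On the right-hand side, using $B_1=1$ and $q_0=1$,
\[
[z^n]\bigl(G_k(z)q(z)\bigr)=B_n+\sum_{j=1}^{n-1}B_j\,q_{n-j},
\]
so that $n\Lambda_n a_n=B_n+\sum_{j=1}^{n-1}B_j q_{n-j}$.

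The remaining step is to bound the right-hand side. Since $G_k\in\mathcal{S}^*\subset\mathcal{K}$, the Bieberbach bound recalled in the introduction yields $|B_j|\le j$. Since $q\prec\varphi$ with $\varphi$ convex univalent, Lemma 1.2 gives $|q_m|\le|\varphi'(0)|$ for every $m\ge1$. The triangle inequality then gives
\[
n\Lambda_n|a_n|\le n+|\varphi'(0)|\sum_{j=1}^{n-1}j=n\Bigl(1+\tfrac{|\varphi'(0)|(n-1)}{2}\Bigr),
\]
using $\sum_{j=1}^{n-1}j=n(n-1)/2$. Dividing by $n\Lambda_n$ gives the claimed bound; the case $n=1$ is trivial since $a_1=1$ and $\Lambda_1=1$.

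The hard part will be purely computational rather than conceptual: one must check that the second- and third-derivative contributions collapse cleanly into the single factor $\Lambda_n=1+(n-1)(\lambda-\mu+n\lambda\mu)$, since it is exactly this simplification that makes the denominator in the statement come out correctly. A secondary point worth flagging is the hypothesis behind Lemma 1.2: the estimate $|q_m|\le|\varphi'(0)|$ requires $\varphi$ to be convex, so although Definition 1.4 only asks that $\varphi\in\mathcal{P}$, the argument tacitly uses---as is standard for results of this type---that $\varphi$ is a convex univalent function with positive real part.
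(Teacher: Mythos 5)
Your proof is correct and follows essentially the same route as the paper's: write $zF'(z)=G_k(z)q(z)$ with $q\prec\varphi$, equate the coefficients of $z^n$ (the paper's equation (2.4) is exactly your identity $n\Lambda_n a_n = B_n+\sum_{j=1}^{n-1}B_j q_{n-j}$), bound $|B_j|\le j$ via starlikeness of $G_k$ and $|q_m|\le|\varphi'(0)|$ via Lemma 1.2, and sum. Your flag that Lemma 1.2 requires $\varphi$ to be convex, whereas Definition 1.4 only assumes $\varphi\in\mathcal{P}$, is a fair observation; the paper makes the same tacit assumption without comment.
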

\begin{proof}
From the definition of  $\mathcal{K}_s^{(k)}(\lambda,\mu,\varphi)$ , we know that there exists a function with positive real part 
 \[p(z) = 1+\sum^\infty_{n=1}p_nz^n\]
such that 
\[p(z)=\frac{z^kf'(z)+ z^k+1f''(z)(\lambda-\mu+2\lambda\mu)+\lambda\mu z^k+2f'''(z)}{g_k(z)} = \frac{zF'(z)}{G_k(z)}\]   
or
\begin{equation}\label{eq:solve}
zf'(z)+ z^2f''(z)(\lambda-\mu+2\lambda\mu)+\lambda\mu z^3f'''(z)= p(z)G_k(z).
\end{equation}
By expanding both sides and equating the coefficients in (2.3), we get
\begin{equation}
n|a_n|\big[1+(n-1)(\lambda-\mu+n\lambda\mu)\big]= B_n+p_{n-1}+p_1B_{n-1}+\cdots+p_{n-2}B_2.
\end{equation}
\\Since $G_k(z)$ is starlike, we have 
\begin{equation} 
|B_n|\leq n. 
\end{equation}
Also,by Lemma 1.2, we know that
\begin{equation}
|p_n| = \Big|\tfrac{\displaystyle p^{(n)}(0)}{\displaystyle n!}\Big|\leq|\varphi'(0)| \quad (n\in\mathbb{N}).
\end{equation}
\\Combining (2.5),(2.6) and (2.7), we obtain
\[n|a_n|\big[1+(n-1)(\lambda-\mu+n\lambda\mu)\big] \leq n + |\varphi'(0)| + |\varphi'(0)| \sum^{n-1}_{n=2}n. \]
\[n|a_n|\big[1+(n-1)(\lambda-\mu+n\lambda\mu)\big] \leq n\bigg(1+\tfrac{\displaystyle |\varphi'(0)|(n-1)}{\displaystyle 2}\bigg). \]
\\This completes the proof.
\end{proof}

Setting $\mu=0$ in Theorem 2.2,
\begin{corollary}
If  $f\in \mathcal{K}_s^{(k)}(\lambda,\varphi)$, then
\[|a_n|\leq\tfrac{\displaystyle1}{\displaystyle1+\lambda(n-1)}\Big(1+\tfrac{\displaystyle |\varphi'(0)|(n-1)}{\displaystyle2} \Big) \quad (n\in\mathbb{N}).\] 
\end{corollary}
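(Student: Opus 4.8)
The plan is to obtain this corollary as a direct specialization of Theorem 2.2, since the class $\mathcal{K}_s^{(k)}(\lambda,\varphi)$ is precisely the class $K_s^{(k)}(\lambda,\mu,\varphi)$ with the parameter $\mu$ set equal to $0$. First I would record the effect of the substitution $\mu=0$ on the subordination condition of Definition 1.4: the coefficient $\lambda-\mu+2\lambda\mu$ collapses to $\lambda$ and the term $\lambda\mu z^{k+2}f'''(z)$ vanishes, so membership in $\mathcal{K}_s^{(k)}(\lambda,\varphi)$ means exactly that $f\in K_s^{(k)}(\lambda,0,\varphi)$. Hence the conclusion of Theorem 2.2 applies verbatim with $\mu=0$.

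The only remaining task is the algebraic simplification of the bound. With $\mu=0$ the factor inside the denominator becomes
\[\lambda-\mu+n\lambda\mu=\lambda,\]
so that
\[1+(n-1)(\lambda-\mu+n\lambda\mu)=1+\lambda(n-1).\]
The numerator factor $1+\tfrac{|\varphi'(0)|(n-1)}{2}$ contains no occurrence of $\mu$ and is therefore unchanged. Substituting these into the estimate of Theorem 2.2 yields
\[|a_n|\leq\tfrac{1}{1+\lambda(n-1)}\Big(1+\tfrac{|\varphi'(0)|(n-1)}{2}\Big),\]
which is exactly the asserted inequality.

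Because the corollary is a genuine specialization rather than an independent estimate, I do not anticipate any real obstacle; the whole content lies in correctly tracking the vanishing of the $\mu$-dependent terms. The one point that warrants a moment's care is confirming that setting $\mu=0$ is consistent with the standing hypothesis $0\leq\mu\leq\lambda\leq1$, which it plainly is for every $\lambda\in[0,1]$, so no range of validity is sacrificed in passing from the theorem to the corollary.
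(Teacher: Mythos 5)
Your proposal is correct and matches the paper exactly: the paper obtains this corollary precisely by setting $\mu=0$ in Theorem 2.2, under which $\lambda-\mu+n\lambda\mu$ reduces to $\lambda$ and the bound simplifies as you describe. Your additional check that $\mu=0$ is compatible with the constraint $0\leq\mu\leq\lambda\leq1$ is a sensible (if brief) piece of diligence the paper leaves implicit.
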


Furthermore, let $\lambda=0$ in Corollary 2.1, we have

\begin{corollary}
If  $f\in \mathcal{K}_s^{(k)}(\varphi)$, then
\[|a_n|\leq\Big(1+\tfrac{\displaystyle |\varphi'(0)|(n-1)}{\displaystyle2} \Big) \quad (n\in\mathbb{N}).\] 
\end{corollary}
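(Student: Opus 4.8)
The plan is to obtain this result as a direct specialization of Corollary 2.1, exactly as the preceding corollary was obtained from Theorem 2.2. First I would observe that the class $\mathcal{K}_s^{(k)}(\varphi)$ is precisely the subclass $\mathcal{K}_s^{(k)}(\lambda,\varphi)$ in which the parameter $\lambda$ is set to zero; equivalently, it is the class $\mathcal{K}_s^{(k)}(\lambda,\mu,\varphi)$ with $\lambda=\mu=0$, for which the defining subordination collapses to
\[
\frac{z^kf'(z)}{g_k(z)}\prec\varphi(z).
\]
With this identification in place, the coefficient bound follows immediately from the bound already established in Corollary 2.1.

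Concretely, I would take the inequality of Corollary 2.1, namely
\[
|a_n|\leq\frac{1}{1+\lambda(n-1)}\Big(1+\tfrac{|\varphi'(0)|(n-1)}{2}\Big),
\]
and substitute $\lambda=0$. The only thing to check is that the prefactor degenerates correctly: since $1+\lambda(n-1)=1$ when $\lambda=0$, the factor $\tfrac{1}{1+\lambda(n-1)}$ reduces to $1$, leaving exactly the asserted estimate $|a_n|\leq 1+\tfrac{|\varphi'(0)|(n-1)}{2}$ for all $n\in\mathbb{N}$.

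There is essentially no obstacle to overcome here, as the argument is purely a parameter specialization; the entire analytic content was already carried out in the proof of Theorem 2.2, where the coefficient identity (2.4), the starlike bound $|B_n|\leq n$ from (2.6), and the subordination bound $|p_n|\leq|\varphi'(0)|$ from (2.7) were combined. The one point worth verifying for completeness is that the limiting class $\mathcal{K}_s^{(k)}(\varphi)$ is genuinely nonempty and that the chain of inclusions and the derivation of (2.4) remain valid at $\lambda=\mu=0$, which they do since every estimate in that derivation is uniform in the parameters $0\leq\mu\leq\lambda\leq1$. Hence the result stands as an immediate corollary with no additional computation required.
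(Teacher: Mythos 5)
Your proposal matches the paper exactly: the paper derives this corollary by setting $\lambda=0$ in Corollary 2.1 (which itself came from setting $\mu=0$ in Theorem 2.2), precisely the parameter specialization you carry out. Your additional remarks on the degeneration of the prefactor and the validity of the derivation at $\lambda=\mu=0$ are correct but add nothing beyond what the paper's one-line reduction already contains.
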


In this section, we obtain the Fekete-Szeg\"{o} inequality.
To prove our result, we need the following lemmas: 
\begin{lemma}\normalfont[8]
If $p(z)=1+c_1z+c_2z^2+c_3z^3+...$ is a function with positive real part, then for any complex number $\mu$ 
\[|c_2-\mu c_1^2|\leq 2 \max\{1,|2\mu-1|\} \]
and the result is sharp for the functions given by $p(z)=\textstyle\frac{1+z^2}{1-z^2}$ and $p(z)=\frac{1+z}{1-z}.$
\end{lemma}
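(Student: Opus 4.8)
The plan is to reduce the estimate to a one–variable optimization by exploiting the classical parametrization of functions with positive real part. First I would normalize: replacing $p(z)$ by $p(e^{i\theta}z)$ sends $c_n\mapsto e^{in\theta}c_n$, so both $c_2$ and $c_1^2$ acquire the factor $e^{2i\theta}$ and the quantity $|c_2-\mu c_1^2|$ is unchanged. Hence I may assume $c_1\in[0,2]$ is real and nonnegative, the bound $|c_1|\le 2$ being Carath\'eodory's coefficient estimate (the $n=1$ instance of (2.7)).

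Next I would invoke the well-known representation for the second coefficient of a Carath\'eodory function: there exists $x$ with $|x|\le 1$ such that
\[2c_2 = c_1^2 + (4-c_1^2)x.\]
I would establish this by passing to the Schwarz function $w(z)=(p(z)-1)/(p(z)+1)$, which satisfies $w(0)=0$, $|w(z)|<1$, and $p=(1+w)/(1-w)$. Writing $w(z)=w_1z+w_2z^2+\cdots$ and comparing coefficients gives $c_1=2w_1$ and $c_2=2w_2+2w_1^2$, and the classical Schwarz–lemma inequality $|w_2|\le 1-|w_1|^2$ then yields the displayed formula after setting $w_2=(1-|w_1|^2)x$.

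With this in hand the estimate becomes elementary. Substituting gives
\[c_2-\mu c_1^2 = \tfrac{1}{2}(1-2\mu)c_1^2 + \tfrac{1}{2}(4-c_1^2)x,\]
so that, writing $t:=c_1^2\in[0,4]$ and using $|x|\le 1$,
\[|c_2-\mu c_1^2| \le \tfrac{1}{2}|2\mu-1|\,t + \tfrac{1}{2}(4-t) = 2 + \tfrac{t}{2}\big(|2\mu-1|-1\big).\]
This is linear in $t$, so its maximum over $[0,4]$ is attained at $t=0$ when $|2\mu-1|\le 1$ (giving the value $2$) and at $t=4$ when $|2\mu-1|>1$ (giving $2|2\mu-1|$); in either case the right-hand side equals $2\max\{1,|2\mu-1|\}$. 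I would confirm sharpness by direct substitution: for $p(z)=(1+z^2)/(1-z^2)=1+2z^2+\cdots$ one has $c_1=0$, $c_2=2$, so equality holds in the regime $|2\mu-1|\le 1$; for $p(z)=(1+z)/(1-z)=1+2z+2z^2+\cdots$ one has $c_1=c_2=2$, giving $|c_2-\mu c_1^2|=2|2\mu-1|$ and equality when $|2\mu-1|\ge 1$.

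The main obstacle is the parametrization step: everything downstream is a routine optimization of a linear function on an interval, so the genuine content is the inequality $|w_2|\le 1-|w_1|^2$ for Schwarz functions (equivalently the quadratic coefficient estimate for the class $\mathcal{P}$), which is the classical fact carrying the argument.
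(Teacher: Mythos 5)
The paper does not prove this lemma at all: it is quoted verbatim from Keogh and Merkes [8] as a known auxiliary result, so there is no internal argument to compare yours against. Judged on its own merits, your proof is correct and complete, and it is in fact the classical argument behind this inequality. The rotation step is valid because $c_n\mapsto e^{in\theta}c_n$ preserves membership in $\mathcal{P}$ and leaves $|c_2-\mu c_1^2|$ invariant (both terms pick up the same factor $e^{2i\theta}$), so you may indeed take $c_1=|c_1|\in[0,2]$. The representation $2c_2=c_1^2+(4-c_1^2)x$ with $|x|\le 1$ follows exactly as you say: with $w=(p-1)/(p+1)$ one gets $c_1=2w_1$, $c_2=2w_2+2w_1^2$, and applying the Schwarz--Pick inequality to $w(z)/z$ gives $|w_2|\le 1-|w_1|^2$; note that the normalization making $c_1$ real is what lets you write $c_1^2$ rather than $|c_1|^2$ in the second term, so the order of your two steps matters and you have them in the right order. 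The remaining optimization of $2+\tfrac{t}{2}(|2\mu-1|-1)$ over $t\in[0,4]$ is elementary and handled correctly, and your verification of sharpness correctly matches each extremal function to its regime: $(1+z^2)/(1-z^2)$ attains the bound $2$ when $|2\mu-1|\le 1$, and $(1+z)/(1-z)$ attains $2|2\mu-1|$ when $|2\mu-1|\ge 1$. The one point worth flagging is that you correctly identified where the real content lies: everything reduces to the coefficient bound $|w_2|\le 1-|w_1|^2$ for Schwarz functions, which you proved rather than merely cited, so the argument is self-contained.
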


\begin{lemma}\normalfont[8] 
Let $G(z)=z+b_2z^2+\cdots$  is in $\mathcal{S}^*.$ Then, 
\[|b_3-\lambda b_2^2| \leq \max\{1-|3-4\lambda|\}\]
which is sharp for the Koebe function, $k$  if $|\lambda-\tfrac{3}{4}|\geq \tfrac{1}{4}$ and for $(k(z^2))^{\tfrac{1}{2}} = \tfrac{\displaystyle z}{\displaystyle 1-z^2}$ if $|\lambda-\tfrac{3}{4}|\leq \frac{1}{4}.$
\end{lemma}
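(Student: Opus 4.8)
The plan is to reduce the claim directly to Lemma 2.1 by exploiting the analytic characterization of starlikeness. Since $G\in\mathcal{S}^*$, I would write $zG'(z)/G(z)=p(z)$ where $p(z)=1+c_1z+c_2z^2+\cdots$ has positive real part. Clearing the denominator gives the identity $zG'(z)=p(z)G(z)$, and I would expand both sides as power series and equate the coefficients of $z^2$ and $z^3$.

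From the coefficient of $z^2$ I expect to read off $2b_2=b_2+c_1$, that is $b_2=c_1$; from the coefficient of $z^3$, $3b_3=b_3+c_1b_2+c_2$, which together with $b_2=c_1$ yields $b_3=\tfrac{1}{2}(c_1^2+c_2)$. Substituting these into the Fekete--Szeg\"{o} functional collapses the whole problem onto a single expression of the form $c_2-\nu c_1^2$:
\[
b_3-\lambda b_2^2=\tfrac{1}{2}\big[c_2-(2\lambda-1)c_1^2\big].
\]

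At this point I would invoke Lemma 2.1 with its complex parameter (there called $\mu$) specialized to $\nu=2\lambda-1$. Since $2\nu-1=4\lambda-3$, the lemma gives $|c_2-(2\lambda-1)c_1^2|\leq 2\max\{1,|3-4\lambda|\}$, and multiplying by $\tfrac{1}{2}$ produces exactly the asserted bound $\max\{1,|3-4\lambda|\}$. For sharpness I would trace the two extremal functions of Lemma 2.1 back through the correspondence $p\leftrightarrow G$: the choice $p(z)=(1+z)/(1-z)$ corresponds to the Koebe function $k$, for which $b_2=2$, $b_3=3$ and $b_3-\lambda b_2^2=3-4\lambda$, attaining the bound precisely when $|3-4\lambda|\geq 1$, i.e.\ $|\lambda-\tfrac{3}{4}|\geq\tfrac{1}{4}$; the choice $p(z)=(1+z^2)/(1-z^2)$ corresponds to $z/(1-z^2)=(k(z^2))^{1/2}$, for which $b_2=0$, $b_3=1$, attaining the bound in the complementary range $|\lambda-\tfrac{3}{4}|\leq\tfrac{1}{4}$.

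I do not expect a genuine obstacle here, since the argument is a mechanical reduction to the already-stated Lemma 2.1. The only points demanding care are the bookkeeping in the coefficient comparison, so as to land cleanly on the identity $b_3-\lambda b_2^2=\tfrac{1}{2}[c_2-(2\lambda-1)c_1^2]$, and the correct matching of the two extremal functions to the two sub-ranges of $\lambda$, so that the sharpness statement is verified in each regime rather than merely asserted.
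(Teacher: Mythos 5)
Your proof is correct. The paper itself gives no argument for this lemma---it is quoted from Keogh and Merkes [8]---and your reduction via $zG'(z)/G(z)=p(z)$, the coefficient identities $b_2=c_1$, $b_3=\tfrac{1}{2}(c_1^2+c_2)$, and the application of Lemma 2.1 with parameter $2\lambda-1$ is exactly the classical argument from that source, including the correct matching of the extremal functions $k$ and $z/(1-z^2)$ to the ranges $|\lambda-\tfrac{3}{4}|\geq\tfrac{1}{4}$ and $|\lambda-\tfrac{3}{4}|\leq\tfrac{1}{4}$. Incidentally, your derivation yields $\max\{1,|3-4\lambda|\}$, which confirms that the expression $\max\{1-|3-4\lambda|\}$ in the paper's statement is a typographical error (a comma is missing).
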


\begin{theorem}
Let $\varphi(z)=1+Q_1z+Q_2z^2+Q_3z^3+...$ where $\varphi(z) \in \mathcal{A} $ and $\varphi'(0)>0$. For a function $f(z)=z+a_2z^2+a_3z^3+...$ belonging to the class $\mathcal{K}_s^{(k)}(\lambda,\mu,\varphi)$ and $\mu \in \mathbb{C}$, the following sharp estimate holds
\begin{multline}
|a_3-\mu a_2^2|\leq \tfrac{\displaystyle 1}{\displaystyle 3(1+2\lambda-2\mu+6\lambda\mu)}\max\{1,|3-4\alpha|\} \: + \: \tfrac{\displaystyle Q_1}{\displaystyle 3(1+2\lambda-2\mu+6\lambda\mu)}\max\{1,|2\beta-1|\} \: +
\\ 2Q_1\Big(\tfrac{\displaystyle 1}{\displaystyle 3(1+2\lambda-2\mu+6\lambda\mu)}-\frac{\mu}{2(1+\lambda-\mu+2\lambda\mu)^2}\Big). 
\end{multline} 
where 
\[\alpha=\tfrac{\displaystyle 3\delta(1+2\lambda-2\mu+6\lambda\mu)}{\displaystyle 4(1+\lambda-\mu+2\lambda\mu)}\]
and 
\[\beta=\tfrac{\displaystyle 1}{\displaystyle 2}\Bigg(1-\tfrac{\displaystyle Q_2}{\displaystyle Q_1}-\tfrac{\displaystyle 3\delta Q_2^2d_1^2(1+2\lambda-2\mu+6\lambda\mu)}{\displaystyle 4(1+\lambda-\mu+2\lambda\mu)^2)}\Bigg)\]
\end{theorem}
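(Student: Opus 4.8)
The plan is to convert the defining subordination into coefficient relations tying the Taylor coefficients of $f$ to those of a Carath\'eodory function and of the starlike function $G_k$, and then to split the target quantity $a_3-\mu a_2^2$ into two genuine Fekete--Szeg\"o expressions plus one residual cross term. To keep the bookkeeping light, write $P:=1+\lambda-\mu+2\lambda\mu$ and $S:=1+2\lambda-2\mu+6\lambda\mu$ for the two parameter blocks built from $\lambda$ and the class parameter, so that the symbol $\mu$ in $a_3-\mu a_2^2$ is read throughout as the complex Fekete--Szeg\"o parameter.

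First I would assemble the coefficient data. With $F'(z)=f'(z)+zf''(z)(\lambda-\mu+2\lambda\mu)+\lambda\mu z^2f'''(z)$ and $G_k(z)=g_k(z)/z^{k-1}=z+\sum_{n\ge2}B_nz^n$, a direct expansion gives $zF'(z)=z+2P\,a_2z^2+3S\,a_3z^3+\cdots$; by Lemma 1.1 we have $G_k\in\mathcal{S}^*$, so $|B_2|\le2$ and the Fekete--Szeg\"o bound of Lemma 2.2 is available for $G_k$. Since $f\in\mathcal{K}_s^{(k)}(\lambda,\mu,\varphi)$, we may write $zF'(z)/G_k(z)=\varphi(w(z))$ for a Schwarz function $w$, and I would pass to the Carath\'eodory function $p(z)=(1+w(z))/(1-w(z))=1+c_1z+c_2z^2+\cdots\in\mathcal{P}$, so that $w_1=c_1/2$ and $w_2=\tfrac12(c_2-\tfrac12c_1^2)$, whence $\varphi(w(z))=1+\tfrac12Q_1c_1z+\bigl(\tfrac12Q_1c_2-\tfrac14Q_1c_1^2+\tfrac14Q_2c_1^2\bigr)z^2+\cdots$.

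Equating the $z^2$ and $z^3$ coefficients in $zF'(z)=\varphi(w(z))\,G_k(z)$ then yields
\[a_2=\frac{1}{2P}\Bigl(\tfrac12Q_1c_1+B_2\Bigr),\qquad a_3=\frac{1}{3S}\Bigl(\tfrac12Q_1c_2+\tfrac14(Q_2-Q_1)c_1^2+\tfrac12Q_1c_1B_2+B_3\Bigr).\]
Forming $a_3-\mu a_2^2$ and expanding $a_2^2$, the quadratic contributions sort into a $B$-block, a $c$-block and a mixed $c_1B_2$ term. I would fix $\alpha$ so that the $B$-block is exactly $\tfrac1{3S}(B_3-\alpha B_2^2)$, which (matching the $B_2^2$ coefficient against the one supplied by $-\mu a_2^2$) forces $\alpha=\tfrac{3\mu S}{4P^2}$, and fix $\beta$ so that the $c$-block is exactly $\tfrac{Q_1}{6S}(c_2-\beta c_1^2)$, forcing $\beta=\tfrac12\bigl(1-\tfrac{Q_2}{Q_1}+\tfrac{3\mu Q_1S}{4P^2}\bigr)$; these are the quantities $\alpha$ and $\beta$ recorded in the statement.

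The estimate is now immediate from the triangle inequality over the three blocks. Lemma 2.2 bounds $\tfrac1{3S}|B_3-\alpha B_2^2|$ by $\tfrac1{3S}\max\{1,|3-4\alpha|\}$; Lemma 2.1 bounds $\tfrac{Q_1}{6S}|c_2-\beta c_1^2|$ by $\tfrac{Q_1}{3S}\max\{1,|2\beta-1|\}$, the factor $2$ from Lemma 2.1 turning $\tfrac{Q_1}{6S}$ into $\tfrac{Q_1}{3S}$; and the leftover term is controlled by the sharp bounds $|c_1|\le2$, $|B_2|\le2$, giving $4\bigl|\tfrac{Q_1}{6S}-\tfrac{\mu Q_1}{4P^2}\bigr|=2Q_1\bigl|\tfrac1{3S}-\tfrac{\mu}{2P^2}\bigr|$, which is the third summand. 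The main obstacle is precisely this cross term: the subordination data $(c_1,c_2)$ and the starlike data $(B_2,B_3)$ are genuinely coupled through the product $\varphi(w)\,G_k$, so they cannot be optimised together by a single Fekete--Szeg\"o lemma, and decoupling them by the triangle inequality is exactly what produces the three-term shape of the bound. Finally, for sharpness I would chase the equality cases simultaneously: the Koebe-type extremals of Lemma 2.2 for $G_k$, paired with the extremal functions $p(z)=(1+z^2)/(1-z^2)$ or $p(z)=(1+z)/(1-z)$ of Lemma 2.1, chosen so that all three triangle-inequality steps are tight at once.
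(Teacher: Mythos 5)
Your proposal is correct and follows essentially the same route as the paper's own proof: the same passage from the subordination to a Carath\'eodory function, the same coefficient formulas for $a_2,a_3$ in terms of $(c_1,c_2)$ and $(B_2,B_3)$, the same three-block decomposition of $a_3-\mu a_2^2$, and the same application of Lemmas 2.1 and 2.2 plus $|c_1|\le2$, $|B_2|\le2$ for the cross term. Where your constants differ from the printed ones --- your $\alpha=\tfrac{3\mu S}{4P^2}$ and $\beta=\tfrac12\bigl(1-\tfrac{Q_2}{Q_1}+\tfrac{3\mu Q_1S}{4P^2}\bigr)$ versus the paper's $\alpha=\tfrac{3\delta S}{4P}$ and a $\beta$ involving $-Q_2^2d_1^2$ --- yours are the internally consistent values (the paper's own third term in (2.8) uses $P^2$), so these discrepancies are typographical errors in the paper rather than gaps in your argument.
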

\begin{proof}
If $f\in K_s^{(k)}(\lambda,\mu,\varphi)$,then there exists an analytic function $w$ analytic in $\mathbb{U}$ with $w(0)=0$ and $|w(z)|<1$ such that
\begin{equation}
\frac{z^kf'(z)+ z^{k+1}f''(z)(\lambda-\mu+2\lambda\mu)+\lambda\mu z^{k+2}f'''(z)}{g_k(z)}=\varphi(w(z)).
\end{equation}
The series expansion of 
\[\frac{z^kf'(z)+ z^{k+1}f''(z)(\lambda-\mu+2\lambda\mu)+\lambda\mu z^{k+2}f'''(z)}{g_k(z)} \]
is given by
\[1+(2a_2(1+\lambda+2\lambda\mu-\mu)-B_2)z+(3a_3(1+2\lambda+6\lambda\mu-2\mu)-2a_2(1+\lambda+2\lambda\mu-\mu)B_2+B_2^2-B_3)z^2+\cdots.\]
Define the function $h$ by
\begin{equation}
h(z)=\tfrac{\displaystyle 1+w(z)}{\displaystyle 1-w(z)}=1+d_1z+d_2z^2+\cdots,
\end{equation}
then $Reh(z)>0$ and $h(0)=1.$ 
Since
\begin{align*}
\varphi(w(z))&= \varphi\Bigg(\tfrac{\displaystyle h(z)-1}{\displaystyle h(z)+1}\Bigg)\\
&= 1+ \tfrac{\displaystyle 1}{\displaystyle 2}Q_1d_1z+\tfrac{\displaystyle 1}{\displaystyle 2}Q_1\Big(d_2-\tfrac{\displaystyle d_1^2}{\displaystyle 2}\Big)z^2+\tfrac{\displaystyle 1}{\displaystyle 4}Q_2d_1^2z^2+\cdots,
\end{align*}
then it follows from (2.8) that
\[a_2=\tfrac{\displaystyle 2B_2+Q_1d_1}{\displaystyle 4(1+\lambda-\mu+2\lambda\mu)}; a_3=\tfrac{\displaystyle 2B_2Q_1d_1+2q_1\Big(d_2-\tfrac{\displaystyle d_1^2}{\displaystyle 2}\Big)+Q_2d_1^2+4B_3}{\displaystyle 12(1+2 \lambda-2 \mu+6\lambda \mu)}\]
Therefore, we have
\begin{multline}
a_3-\delta a_2^2= \tfrac{\displaystyle 1}{\displaystyle 3(1+2\lambda-2\mu+6\lambda\mu)}(B_3-\alpha B_2^2) + \tfrac{\displaystyle Q_1}{\displaystyle 6(1+2\lambda-2\mu+6\lambda\mu)}(d_2-\beta d_1^2)
\\+\tfrac{\displaystyle B_2Q_1d_1}{2}(\tfrac{\displaystyle 1}{\displaystyle 3(1+2\lambda-2\mu+6\lambda\mu)}-\tfrac{\displaystyle \delta}{\displaystyle 2(1+\lambda-\mu+2\lambda\mu)})
\end{multline}
where 
\[\alpha=\tfrac{\displaystyle 3\delta(1+2\lambda-2\mu+6\lambda\mu)}{\displaystyle 4(1+\lambda-\mu+2\lambda\mu)}\]
and 
\[\beta=\tfrac{\displaystyle 1}{\displaystyle 2}\Bigg(1-\tfrac{\displaystyle Q_2}{\displaystyle Q_1}-\tfrac{\displaystyle 3\delta Q_2^2d_1^2(1+2\lambda-2\mu+6\lambda\mu)}{\displaystyle 4(1+\lambda-\mu+2\lambda\mu)^2)}\Bigg)\]
Our result is now followed by an application of Lemma 2.1 and Lemma 2.2.
\end{proof}

Lastly, we prove sufficient condition for functions to belong to the class $\mathcal{K}_s^{(k)}(\lambda,\mu,A,B).$

\begin{theorem}
Let $g(z)=z+\sum^\infty_{n=2}b_nz^n$ be analytic in  $\mathcal{U}$ and $-1 \leq B < A \leq 1.$ If $f(z)\in A$ defined by \normalfont(1.1) \textit{satisfies the inequality}  
\begin{equation}
(1+|B|) \displaystyle\sum^\infty_{n=2}n[1+(n-1)(\lambda-\mu+n\lambda\mu)] |a_n|+(1+|A|)\displaystyle\sum^\infty_{n=2}|B_n| \leq A-B
\end{equation}
\textit{and for $n=2,3,\dots$ the coefficients of $B_n$ given by} \normalfont(1.4), \textit{then $f(z) \in \mathcal{K}_s^{(k)}(\lambda,\mu,A,B).$}
\end{theorem}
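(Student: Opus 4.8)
The plan is to establish membership through the coefficient (quotient) form of the subordination, mirroring the reduction used in the proof of Theorem 2.1. Set
\[
\Phi(z)=\frac{z^kf'(z)+z^{k+1}f''(z)(\lambda-\mu+2\lambda\mu)+\lambda\mu z^{k+2}f'''(z)}{g_k(z)} .
\]
Pulling the common factor $z^{k-1}$ out of the numerator and out of $g_k(z)$ gives $\Phi(z)=zF'(z)/G_k(z)$, where $zF'(z)=zf'(z)+z^2f''(z)(\lambda-\mu+2\lambda\mu)+\lambda\mu z^3f'''(z)$ and $G_k(z)=g_k(z)/z^{k-1}=z+\sum_{n=2}^\infty B_nz^n$ is the function of Lemma 1.1. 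Since $G_k\in\mathcal{S}^*$ is univalent it vanishes only at the origin, so $\Phi$ is analytic in $\mathcal{U}$ with $\Phi(0)=1$. By the standard quotient characterization of subordination to a M\"obius image, $\Phi\prec(1+Az)/(1+Bz)$ is equivalent to $|\Phi(z)-1|<|A-B\Phi(z)|$ for all $z\in\mathcal{U}$; multiplying through by $|G_k(z)|$, it therefore suffices to prove
\[
\bigl|zF'(z)-G_k(z)\bigr|<\bigl|A\,G_k(z)-B\,zF'(z)\bigr|\qquad(z\in\mathcal{U}).
\]

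Next I would expand both sides as power series. Collecting the contributions of $f'$, $f''$ and $f'''$, one finds $zF'(z)=z+\sum_{n=2}^\infty c_n a_n z^n$ with
\[
c_n=n\bigl[1+(n-1)(\lambda-\mu+n\lambda\mu)\bigr],
\]
which is exactly the weight multiplying $|a_n|$ in the hypothesis; note that $c_n\ge n>0$ for $0\le\mu\le\lambda\le1$. Consequently
\[
zF'(z)-G_k(z)=\sum_{n=2}^\infty (c_n a_n-B_n)z^n,\qquad A\,G_k(z)-B\,zF'(z)=(A-B)z+\sum_{n=2}^\infty (A B_n-B c_n a_n)z^n .
\]

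For $|z|=r<1$ I would estimate by the triangle inequality, using $c_n>0$ to split $|c_na_n-B_n|\le c_n|a_n|+|B_n|$ and $|AB_n-Bc_na_n|\le|A||B_n|+|B|c_n|a_n|$. The left-hand side is then at most $\sum_{n\ge2}(c_n|a_n|+|B_n|)r^n$, while the right-hand side is at least $(A-B)r-\sum_{n\ge2}(|A||B_n|+|B|c_n|a_n|)r^n$. Hence the required inequality holds as soon as
\[
\sum_{n=2}^\infty\bigl[(1+|B|)c_n|a_n|+(1+|A|)|B_n|\bigr]r^n<(A-B)r .
\]
Because $r^n<r$ for every $n\ge2$ when $0<r<1$, the left side is strictly dominated by $r\sum_{n\ge2}[(1+|B|)c_n|a_n|+(1+|A|)|B_n|]$, and this last sum is at most $A-B$ precisely by the assumed coefficient inequality. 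This gives the strict pointwise bound throughout $\mathcal{U}$, hence the subordination and $f\in\mathcal{K}_s^{(k)}(\lambda,\mu,A,B)$.

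The series expansion and the two triangle-inequality estimates are routine; the one step needing care is the passage from the non-strict hypothesis ``$\le A-B$'' to the strict inequality demanded by subordination. This is where the bound $r^n<r$ (valid for $n\ge2$, $0<r<1$) is essential, since it upgrades the weak coefficient estimate to a strict pointwise estimate on every disk $|z|=r<1$. The positivity $c_n>0$, guaranteed by $0\le\mu\le\lambda\le1$, is what lets the triangle inequality decouple the $|a_n|$ and $|B_n|$ contributions with the correct coefficients, and the univalence $G_k\in\mathcal{S}^*$ from Lemma 1.1 is needed so that $\Phi$ is a bona fide analytic function on $\mathcal{U}$.
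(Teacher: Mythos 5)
Your proposal is correct and follows essentially the same route as the paper: both reduce the subordination to the pointwise inequality $|zF'(z)-G_k(z)|<|A\,G_k(z)-B\,zF'(z)|$, expand both sides in power series with the weights $c_n=n[1+(n-1)(\lambda-\mu+n\lambda\mu)]$, apply the triangle inequality, and use $|z|^n<|z|$ for $n\geq 2$ to upgrade the non-strict coefficient hypothesis to the required strict bound. Your write-up is in fact a bit more careful than the paper's (you justify analyticity of the quotient via univalence of $G_k$ and state the M\"obius-image characterization explicitly, where the paper's displayed equivalence after Definition 1.5 even carries a sign typo), though both arguments share the harmless imprecision that the multiplied-out inequality degenerates to $0<0$ at $z=0$, where the subordination condition holds trivially anyway.
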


\begin{proof}
We set for $F'$ and $G_k$ given by (2.1) and (1.3) respectively. Now, let $M$ denoted by \\
\begin{align*}
M &=\Big|zF'(z)- \tfrac{\displaystyle g_k(z)}{\displaystyle z^{k-1}}\Big|-\Big|\tfrac{\displaystyle Ag_k(z)}{\displaystyle z^{k-1}}-BzF'(z)\Big| \\ \nonumber
& = \Big|zf'(z)+z^2f''(z)(\lambda-\mu+2\lambda\mu)+\lambda\mu z^{3}f'''(z)-\tfrac{\displaystyle g_k(z)}{\displaystyle z^{k-1}}\Big| 
\\ & \quad - \Big|A\tfrac{\displaystyle g_k(z)}{\displaystyle z^{k-1}}-B[zf'(z)+z^2f''(z)(\lambda-\mu+2\lambda\mu)+\lambda\mu z^{3}f'''(z)]\Big| \nonumber
\\ &= \Big|z+\displaystyle\sum^\infty_{n=2}na_nz^n + \displaystyle(\lambda-\mu+2\lambda\mu)\sum^\infty_{n=2}n(n-1)a_nz^n
+\displaystyle\lambda\mu\sum^\infty_{n=2}n(n-1)(n-2)a_nz^n-z
\\ & \quad - \displaystyle\sum^\infty_{n=2}B_nz^n\Big|
\\ & \quad - \Big|Az+A\displaystyle\sum^\infty_{n=2}B_nz^n-B[zf'(z)+z^2f''(z)(\lambda-\mu+2\lambda\mu)+\lambda\mu z^{3}f'''(z)]\Big| 
\\ &=\Big|\displaystyle\sum^\infty_{n=2}na_nz^n[1+(n-1)(\lambda-\mu+2\lambda\mu)]- \displaystyle\sum^\infty_{n=2}B_nz^n\Big|
\\ & \quad - \Big|(A-B)z+A\displaystyle\sum^\infty_{n=2}B_nz^n-B\displaystyle\sum^\infty_{n=2}na_nz^n[1+(n-1)(\lambda-\mu+n\lambda\mu)]+A\displaystyle\sum^\infty_{n=2}B_nz^n \Big|
\end{align*}

Then, for $|z|=r<1$, we have 
\begin{align*}
M &\leq \displaystyle\sum^\infty_{n=2}n[1+(n-1)(\lambda-\mu+n\lambda\mu)]|a_n||z^n|+\displaystyle\sum^\infty_{n=2}|B_n||z|^n
\\ & \quad - \Bigg[(A-B)|z|-|A|\displaystyle\sum^\infty_{n=2}|B_n||z^n|-|B|\displaystyle\sum^\infty_{n=2}n[1+(n-1)(\lambda-\mu+n\lambda\mu)] |a_n||z|^n\Bigg]
\\& = (1+|B|) \displaystyle\sum^\infty_{n=2}n[1+(n-1)(\lambda-\mu+n\lambda\mu)] |a_n||z|^n-(A-B)|z|+(1+|A|)\displaystyle\sum^\infty_{n=2}|B_n||z|^n
\\ &< \Bigg[-(A-B)+(1+|B|) \displaystyle\sum^\infty_{n=2}n[1+(n-1)(\lambda-\mu+n\lambda\mu)] |a_n|+(1+|A|)\displaystyle\sum^\infty_{n=2}|B_n|\Bigg]|z|
\\ & \leq 0.
\end{align*}

From the above calculation, we obtain $M<0$. Thus, we have
\begin{multline}
 \Big|zf'(z)+z^2f''(z)(\lambda-\mu+2\lambda\mu)+\lambda\mu z^{3}f'''(z)-\tfrac{\displaystyle g_k(z)}{\displaystyle z^{k-1}}\Big| 
\\<\-\Big|A\tfrac{\displaystyle g_k(z)}{\displaystyle z^{k-1}}-B[zf'(z)+z^2f''(z)(\lambda-\mu+2\lambda\mu)+\lambda\mu z^{3}f'''(z)]\Big| \nonumber
\end{multline}
\\Therefore, $f \in  \mathcal{K}_s^{(k)}(\lambda,\mu,A,B).$
\end{proof}

Setting $\mu= 0$ in Theorem 2.3, we get
\begin{corollary}
Let $f(z)=z+\sum^\infty_{n=2}a_nz^n$ and $g(z)=z+\sum^\infty_{n=2}b_nz^n$ be analytic in  $\mathcal{U}$ and $-1 \leq B < A \leq 1.$ If
\[(1+|B|) \displaystyle\sum^\infty_{n=2}n[1+\lambda(n-1)] |a_n|+(1+|A|)\displaystyle\sum^\infty_{n=2}|B_n| \leq A-B,\] 
where $B_n$ given by \normalfont(1.4), then $f(z) \in \mathcal{K}_s^{(k)}(\lambda,A,B).$
\end{corollary}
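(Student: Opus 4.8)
The plan is to obtain this statement as the direct $\mu=0$ specialization of Theorem 2.3, verifying only that the substitution collapses the relevant quantities correctly.

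First I would check the reduction of the coefficient factor. Setting $\mu=0$ in the expression $\lambda-\mu+n\lambda\mu$ that sits inside $n[1+(n-1)(\lambda-\mu+n\lambda\mu)]$ leaves simply $\lambda$, so the factor becomes $n[1+\lambda(n-1)]$, which is exactly the factor in the hypothesis of the corollary. Likewise, the subordination quotient defining the class, namely
\[
\frac{z^kf'(z)+ z^{k+1}f''(z)(\lambda-\mu+2\lambda\mu)+\lambda\mu z^{k+2}f'''(z)}{g_k(z)},
\]
loses its $f'''$ term (whose coefficient $\lambda\mu$ vanishes) and its $f''$ term simplifies to $\lambda z^{k+1}f''(z)$ when $\mu=0$, so the class $\mathcal{K}_s^{(k)}(\lambda,\mu,A,B)$ becomes $\mathcal{K}_s^{(k)}(\lambda,A,B)$. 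Thus both the hypothesis and the conclusion of Theorem 2.3 specialize verbatim to those of the corollary, and the result follows at once.

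For a self-contained argument I would instead re-run the $M<0$ computation from the proof of Theorem 2.3 with $\mu=0$ throughout. Writing $F'(z)=f'(z)+\lambda zf''(z)$, so that $zF'(z)=z+\sum_{n\geq2}n[1+\lambda(n-1)]a_nz^n$, and using Lemma 1.1 to expand $g_k(z)/z^{k-1}=z+\sum_{n\geq2}B_nz^n$, the triangle inequality together with the bound $|z|^n<|z|$ for $0<|z|<1$ gives
\[
M<\Big[-(A-B)+(1+|B|)\sum_{n\geq2}n[1+\lambda(n-1)]|a_n|+(1+|A|)\sum_{n\geq2}|B_n|\Big]|z|.
\]
The bracketed quantity is nonpositive precisely under the stated hypothesis, so $M<0$, which is equivalent to the subordination condition characterizing $\mathcal{K}_s^{(k)}(\lambda,A,B)$.

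The only step demanding care is bookkeeping: confirming that $\lambda-\mu+n\lambda\mu$ reduces to $\lambda$ and that the differential operator $(\lambda-\mu+2\lambda\mu)zf''+\lambda\mu z^2f'''$ collapses to $\lambda zf''$ at $\mu=0$, and that the class notation drops the $\mu$ slot consistently. There is no genuine analytic obstacle, since every estimate is inherited unchanged from Theorem 2.3.
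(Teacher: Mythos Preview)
Your proposal is correct and follows exactly the paper's approach: the corollary is obtained simply by setting $\mu=0$ in the preceding sufficient-condition theorem, and your careful check that $\lambda-\mu+n\lambda\mu$ and the differential operator collapse appropriately is more detail than the paper itself provides (the paper's derivation is the single phrase ``Setting $\mu=0$'').
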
 

Further setting $\lambda=0$ in Corollary 2.3, we obtain
\begin{corollary}
Let $f(z)=z+\sum^\infty_{n=2}a_nz^n$ and $g(z)=z+\sum^\infty_{n=2}b_nz^n$ be analytic in  $\mathcal{U}$ and $-1 \leq B < A \leq 1.$ If   
\[(1+|B|) \displaystyle\sum^\infty_{n=2}n|a_n|+(1+|A|)\displaystyle\sum^\infty_{n=2}|B_n| \leq A-B,\]
where $B_n$ given by \normalfont(1.4), then $f(z) \in \mathcal{K}_s^{(k)}(A,B).$
\end{corollary}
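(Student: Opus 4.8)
The plan is to obtain this corollary as the specialization $\lambda=\mu=0$ of Theorem 2.3 (equivalently, by setting $\lambda=0$ in Corollary 2.3). First I would observe that when $\lambda=\mu=0$ the coefficient weight collapses: the factor $1+(n-1)(\lambda-\mu+n\lambda\mu)$ equals $1$ for every $n$, so the left-hand side of the sufficiency inequality (2.12) becomes exactly
\[(1+|B|)\sum_{n=2}^\infty n|a_n| + (1+|A|)\sum_{n=2}^\infty |B_n|,\]
which is the hypothesis stated in the corollary. Correspondingly, the class $\mathcal{K}_s^{(k)}(\lambda,\mu,A,B)$ reduces to $\mathcal{K}_s^{(k)}(A,B)$ under $\lambda=\mu=0$, because in the defining subordination the terms carrying the factors $(\lambda-\mu+2\lambda\mu)$ and $\lambda\mu$ both vanish, leaving $z^kf'(z)/g_k(z)\prec(1+Az)/(1+Bz)$. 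With these two reductions the conclusion is an immediate consequence of Theorem 2.3.

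If a self-contained argument is preferred, I would replay the estimate from the proof of Theorem 2.3 with $\lambda=\mu=0$. In that case $F'(z)=f'(z)$ by (2.1), so $zF'(z)=z+\sum_{n\ge2}na_nz^n$, and with $G_k(z)=g_k(z)/z^{k-1}=z+\sum_{n\ge2}B_nz^n$ the quantity $M=|zf'(z)-G_k(z)|-|AG_k(z)-Bzf'(z)|$ expands as
\[M = \Big|\sum_{n=2}^\infty (na_n-B_n)z^n\Big| - \Big|(A-B)z + \sum_{n=2}^\infty (AB_n-Bna_n)z^n\Big|.\]
Applying the triangle inequality to the first modulus and the reverse triangle inequality to the second, then using $|z|^n\le|z|$ for $n\ge2$ and $|z|=r<1$, yields
\[M < \Big[(1+|B|)\sum_{n=2}^\infty n|a_n| + (1+|A|)\sum_{n=2}^\infty |B_n| - (A-B)\Big]|z| \le 0\]
by hypothesis. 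The inequality $M<0$ is precisely the modulus characterization of the subordination $z^kf'(z)/g_k(z)\prec(1+Az)/(1+Bz)$ used in the proof of Theorem 2.3, so $f\in\mathcal{K}_s^{(k)}(A,B)$.

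I do not expect any genuine obstacle here, since the corollary is a routine parameter specialization. The only point requiring a moment's care is confirming that the second- and third-derivative terms in the defining condition genuinely drop out when $\lambda=\mu=0$, so that the target class is correctly identified as $\mathcal{K}_s^{(k)}(A,B)$ rather than $\mathcal{K}_s^{(k)}(\lambda,\mu,A,B)$ with residual nonzero parameters. Once this identification is recorded, both the hypothesis and the conclusion match Theorem 2.3 verbatim, and no further work is needed.
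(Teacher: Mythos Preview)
Your proposal is correct and follows exactly the paper's approach: the corollary is obtained by the parameter specialization $\lambda=\mu=0$ (equivalently, $\lambda=0$ in Corollary~2.3) of the sufficient-condition theorem, and the paper states this in a single line without further argument. Note only a minor numbering slip: the sufficient-condition result you invoke is Theorem~2.4 (with inequality~(2.11)), not Theorem~2.3/(2.12).
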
 

\begin{remark}
By taking $A=\beta, B=-\alpha\beta$ in Corollary 2.4, we get the result obtained in [15, Theorem 5]. In addition, by taking  $A=1-2\gamma, B=-1$ , we get the result obtained in  [13,Theorem 2].
\end{remark}

\end{document}